\theoremstyle{plain}
\newtheorem{thm}{Theorem}[section]
\newtheorem{cor}[thm]{Corollary}
\newtheorem{lem}[thm]{Lemma}
\newtheorem{prop}[thm]{Proposition}
\newtheorem{rem}[thm]{Remark}
\newtheorem{ques}[thm]{Question}
\newtheorem{conj}[thm]{Conjecture}
\newtheorem{prob}[thm]{Problem}
\def\bbb{\mathbb}
\def\op{\operatorname}
\renewcommand{\phi}{\varphi}
\newcommand{\R}{\bbb{R}}
\newcommand{\N}{\bbb{N}}
\newcommand{\Z}{\bbb{Z}}
\newcommand{\Q}{\bbb{Q}}
\newcommand{\C}{\bbb{C}}
\let\@@pmod\pmod
\DeclareRobustCommand{\pmod}{\@ifstar\@pmods\@@pmod}
\def\@pmods#1{\mkern4mu({\operator@font mod}\mkern 6mu#1)}
\begin{document}
\title[Signs behaviour of sums of weighted numbers of compositions]{Signs behaviour of sums of weighted numbers of compositions}
\author{Filip Gawron and Maciej Ulas}

\keywords{composition; sum; sign; sums of compositions} \subjclass[2010]{}
\thanks{Research of the authors was supported by a grant of the National Science Centre (NCN), Poland, no. UMO-2019/34/E/ST1/00094}

\begin{abstract}
Let $A$ be a subset of positive integers. For a given positive integer $n$ and $0\leq i\leq n$ let $c_{A}(i,n)$ denotes the number of $A$-compositions of $n$ with exactly $i$ parts. In this note we investigate the sign behaviour of the sequence $(S_{A,k}(n))_{n\in\N}$, where $S_{A,k}(n)=\sum_{i=0}^{n}(-1)^{k}i^{k}c_{A}(i,n)$. We prove that for a broad class of subsets $A$, the number $(-1)^{n}S_{A,k}(n)$ is non-negative for all sufficiently large $n$. Moreover, we show that there is $A\subset \N_{+}$ such that the sign behaviour of $S_{A,k}(n)$ is not periodic.
\end{abstract}

\maketitle

\section{Introduction}\label{sec1}

Let $\N$ be the set of non-negative integers and $\N_{+}$ denotes the set of positive integers. Let $A\subset \N_{+}$ be given. With the set $A$ and a non-negative integer $n$, one can connect two combinatorial objects: an $A$-partition of $n$ and an $A$-composition of $n$. By $A$-partition of $n$, we understand a finite non-increasing sequence $(\lambda_{1},\ldots, \lambda_{k})$, where $\lambda_{i}\in A$ and $n=\sum_{i=1}^{k}\lambda_{i}$.
By $A$-composition of $n$, we understand a finite sequence $(\lambda_{1},\ldots, \lambda_{k})$, where $\lambda_{i}\in A$ and $n=\sum_{i=1}^{k}\lambda_{i}$. The difference is clear. In $A$-compositions the order of summands in $n=\sum_{i=1}^{k}\lambda_{i}$ gives different $A$-compositions. To see an example, let us consider $A=\{1, 2, 3\}$. We have four $A$-partitions of 4 given by
$$
(3, 1), (2, 2), (2, 1, 1), (1, 1, 1, 1).
$$
On the other hand, we have seven $A$-compositions of 4 given by
$$
(3, 1), (1, 3), (2, 2), (2, 1, 1), (1, 2, 1), (1, 1, 2) , (1, 1, 1, 1).
$$
In the sequel, by $p_{A}(n)$ and $c_{A}(n)$ we denote the number of $A$-partitions and $A$-compositions of $n$, respectively. It is clear that $c_{A}(n)>p_{A}(n)$ for each $A\subset\N_{+}$, where $\#A\geq 2$ and $n>\op{min}(A)$. From the general theory we know that
$$
P_{A}(x)=\prod_{a\in A}\frac{1}{1-x^{a}}=\sum_{n=0}^{\infty}p_{A}(n)x^{n}
$$
is an ordinary generating function for the sequence $(p_{A}(n))_{n\in\N}$. One can also consider a polynomial $p_{A, n}(t)$, called $n$-th $A$-partition polynomial, which can be seen as a polynomial analogue of the number $p_{A}(n)$. More precisely, we have
$$
P_{A}(t,x)=\prod_{a\in A}\frac{1}{1-tx^{a}}=\sum_{n=0}^{\infty}p_{A, n}(t)x^{n}.
$$
We write $p_{A,n}(t)=\sum_{i=0}^{n}p_{A}(i, n)t^{n}$ and observe that $p_{A}(i, n)$ is the number of $A$-partitions of $n$ with exactly $i$ parts. In
particular, $p_{A,n}(1)=\sum_{i=0}^{n}p_{A}(i, n)$ is the number of $A$-partitions of $n$. In a recent paper \cite{GU}, we investigated the sign behaviour of the sequence $\left(\sum_{i=0}^{n}(-1)^{i}i^{k}p_{A}(i,n)\right)_{n\in\N}$, where $k\in\N$ is fixed and $n\in\N$. We proved that for a broad class of sets $A$ (including all sets containing only odd numbers) we have $(-1)^{n}\sum_{i=0}^{n}(-1)^{i}i^{k}p_{A}(i,n)\geq 0$ for all $k, n\in\N$.

In this paper we consider related problem and instead of $A$-partitions we consider $A$-compositions. To formulate our basic problem we need some notation. Let $A\subset \N_{+}$ and consider the ordinary generating function $f_{A}(x)$ of the characteristic function of the set $A$, i.e., we write
$$
f_{A}(x)=\sum_{a\in A}x^{a}.
$$
Then
$$
C_{A}(x)=\frac{1}{1-f_{A}(x)}=\sum_{n=0}^{\infty}c_{A}(n)x^{n}
$$
is the ordinary generating function of the sequence $(c_{A}(n))_{n\in\N}$.

Then, the sequence of coefficients $(C_{A,n}(t))_{n\in\N}$ in the power series expansion
$$
C_{A}(t,x)=\frac{1}{1-tf_{A}(x)}=\sum_{n=0}^{\infty}f_{A}(x)^{n}t^{n}=\sum_{n=0}^{\infty}f_{A,n}(t)x^{n}
$$
have a natural combinatorial interpretation, similar to the interpretation of the coefficients of the $n$-th $A$-partition polynomial $p_{A, n}(t)$. More precisely, let us write
$$
f_{A,n}(t)=\sum_{i=0}^{n}c_{A}(i,n)t^{n}.
$$
Then, the $i$th coefficient $c_{A}(i, n)$ of $f_{A,n}(t)$ is just the number of compositions of an integer $n$ into exactly $i$ elements of the set $A$. In particular, we have that  $f_{A,n}(1)=\sum_{i=0}^{n}c_{A}(i,n)=c_{A}(n)$ - the number of an $A$-compositions of $n$. Similarly as in the case of $A$-partitions we investigate the family of sums
$$
S_{A, k}(n)=\sum_{i=0}^{n}(-1)^{i}i^{k}c_{A}(i,n),
$$
where $k, n\in\N$, and formulate the following general question.

\begin{ques}
Let $A\subset\N_{+}$ and $k\in\N$ be fixed. What can be said about the behaviour of the sequence $(\op{sign}(S_{A, k}(n)))_{n\in\N}$? In particular, under which conditions on $A$, we have that $(-1)^{n}S_{A, k}(n)\geq 0$ for a given $k$ and all but finitely many $n$?
\end{ques}

As we will see, there are some similarities in behaviour of signs of the sequence $(S_{A, k}(n))_{n\in\N}$ in case of $A$-partitions and $A$-compositions as well as striking differences between them.

Let us describe the content of the paper in some details. In Section \ref{sec2} we collect basic results concerning the sequence of $A$-partition polynomials and deduce certain recurrence relations for the double sequence $(S_{A, k}(n))_{k, n\in\N}$. In particular, we obtain convolution-like identity connecting $S_{A, k+1}(n)$ with $S_{A, j}(n-i)S_{A, k-j}(i)$ for $j\in\{0, \ldots, k\}$ and $i\in\{0, \ldots, n-1\}$. In Section \ref{sec3} we prove that if $A$ contains only odd numbers, then $(-1)^{n}S_{A, k}(n)\geq 0$ for all $k, n\in\N$. Moreover, the same statement is true for sets of the form $A=\N_{+}\setminus E$, where $E\subset\N_{+}$ contains only even integers. Finally, in the last section, we formulate some problems and questions which may stimulate further research.

\section{Basic observations}\label{sec2}

In order to investigate the sign behaviour of the sequence $(S_{A,k}(n))_{n\in\N}$, where $k\in\N$ is fixed, we consider the differential operator $\delta:\;\R[t]\rightarrow \R[t]$ defined in the following way: for $f\in\R[t]$ we put
$$
\delta(f(t))=t\frac{df(t)}{dt}.
$$
For $k=0$ we put $\delta^{(0)}(f)=f$, and for $k\in\N_{+}$ we define by induction a $k$-th power of the operator $\delta$ as $\delta^{(k)}(f)$, where
$$
\delta^{(k)}(f)=\delta(\delta^{(k-1)}(f)).
$$
The basic properties of the operator $\delta$ can be summarized in the following well known result (which can be also found in \cite[Lemma 2.1]{GU}).

\begin{lem}\label{delta}
\begin{enumerate}
\item $\delta$ is linear operator, i.e., for all $u, v\in\R, f, g\in\R[t]$ we have $\delta(uf+vg)=u\delta(f)+v\delta(g)$;
\item if $f\in\R[t]\setminus\R$ then $\op{deg}\delta(f)=\op{deg}f$;
\item for $k\in\N$ we have
$$
\delta^{(k)}(fg)=\sum_{i=0}^{k}\binom{k}{i}\delta^{(i)}(f)\delta^{(k-i)}(g).
$$
\item if $f(t)=\sum_{i=0}^{n}a_{i}t^{i}\in\R[t]$ and $k\in\N$ is given then
$$
\delta^{(n)}(f(t))=\sum_{i=0}^{n}a_{i}i^{n}t^{i}
$$
\end{enumerate}
\end{lem}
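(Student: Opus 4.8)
The plan is to check the four items directly from the definition, with the two statements about iterated operators, (3) and (4), proved by induction on the order $k$. For (1), unwinding the definition gives $\delta(uf+vg)=t(uf+vg)'=t(uf'+vg')=u\cdot tf'+v\cdot tg'=u\,\delta(f)+v\,\delta(g)$, using nothing beyond linearity of the ordinary derivative. For (2), write $f(t)=\sum_{i=0}^{d}a_it^i$ with $d=\deg f\geq 1$ and $a_d\neq 0$; then $\delta(f)=tf'(t)=\sum_{i=1}^{d}ia_it^i$ has leading coefficient $da_d\neq 0$, so $\deg\delta(f)=d=\deg f$. The hypothesis $f\notin\R$ is precisely what excludes the degenerate case in which $f$ is constant and $\delta(f)=0$.

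For (4) I would first observe that every monomial is an eigenvector of $\delta$, namely $\delta(t^i)=t\cdot it^{i-1}=it^i$; iterating, $\delta^{(k)}(t^i)=i^kt^i$ for all $k\in\N$ (with the convention $0^0=1$, relevant only for the $i=0$ term), and then linearity from (1) yields $\delta^{(k)}\!\left(\sum_{i=0}^{n}a_it^i\right)=\sum_{i=0}^{n}a_ii^kt^i$. I would also note that the displayed statement should read $\delta^{(k)}$ rather than $\delta^{(n)}$ on both sides, to be consistent with the variable $k$ introduced in its hypothesis and with the later use of the lemma, where $S_{A,k}(n)$ is recovered from $f_{A,n}(t)$ by applying $\delta^{(k)}$ and then specializing at $t=-1$.

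For (3) I would induct on $k$, the case $k=0$ being the trivial identity $\delta^{(0)}(fg)=fg$. The inductive step uses the one-step product rule $\delta(uv)=t(uv)'=(tu')v+u(tv')=\delta(u)v+u\,\delta(v)$: applying $\delta$ to the order-$k$ identity and invoking linearity gives
$$
\delta^{(k+1)}(fg)=\sum_{i=0}^{k}\binom{k}{i}\Bigl(\delta^{(i+1)}(f)\delta^{(k-i)}(g)+\delta^{(i)}(f)\delta^{(k+1-i)}(g)\Bigr),
$$
and reindexing the first sum and combining like terms via Pascal's identity $\binom{k}{i-1}+\binom{k}{i}=\binom{k+1}{i}$ produces $\sum_{i=0}^{k+1}\binom{k+1}{i}\delta^{(i)}(f)\delta^{(k+1-i)}(g)$, as desired. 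I do not expect any genuine obstacle in this lemma; the only points requiring a little care are the boundary terms when reindexing in (3) and the $0^0=1$ convention in (4), both of which are routine.
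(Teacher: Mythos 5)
Your proposal is correct and follows the same route as the paper's (much terser) proof: linearity of the derivative for (1) and (2), induction with the Leibniz-type product rule for (3), and the eigenvector identity $\delta^{(k)}(t^{i})=i^{k}t^{i}$ combined with linearity for (4). Your observation that the exponent in item (4) should read $\delta^{(k)}$ rather than $\delta^{(n)}$ is also right, as the paper's subsequent use of the lemma confirms.
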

\begin{proof}
The first two properties are easy consequences of the definition of the operator $\delta$. The proof of the third property is just the application of the Leibnitz rule together with induction on $k$. The last property follows from the linearity of the operator $\delta$ and the equality $\delta^{(k)}(t^{i})=i^{k}t^{i}$.
\end{proof}

As an immediate consequence of the fourth property of the operator $\delta$ we get the equality
$$
S_{A,k}(n)=\sum_{i=0}^{n}(-1)^{i}i^{k}c_{A}(i,n)=\delta^{(k)}(f_{A,n}(t))\mid_{t=-1}.
$$

For a given set $V\in\N_{+}$ we put
$$
V(n):=V\cap\{1, 2, \ldots, n\},
$$
i.e., $V(n)$ contains those elements of $V$ which are $\leq n$.

We start with the following general result.

\begin{thm}\label{genthm}
Let $A\in \N_{+}$ and consider the sequence $(f_{A,n}(t))_{n\in\N}$.
\begin{enumerate}
 \item We have $f_{A,0}(t)=1$ and for $n\in\N_{+}$ the following recurrence relation holds:
 $$
 f_{A,n}(t)=t\sum_{a\in A(n)}f_{A,n-a}(t).
 $$
 \item We have the following identity
 $$
 \sum_{a\in A(n)}a\delta(f_{A,n-a})(t)=\sum_{a\in A(n)}(n-a)f_{A,n-a}(t).
 $$
 \item We have the following identity
 $$
 f_{A,n}(t)+f_{A,n}(-t)=2\sum_{i=0}^{n}f_{A,i}(-t)f_{A,n-i}(t).
 $$
 \item
 Let $A=O_{A}\cup E_{A}$, where $O_{A}=\{a\in A:\;a\equiv 1\pmod*{2}\}$ and $E_{A}=A\setminus O_{A}=\{a\in A:\;a\equiv 0\pmod*{2}\}$. Then, we have the following identity
 $$
\frac{1}{2}\sum_{i=0}^{n}f_{O_{A},i}(-t)\left(f_{A,n-i}(-t)+(-1)^{n-i}f_{A,n-i}(t)\right)=\sum_{i=0}^{n}(-1)^{i}f_{A,i}(t)f_{A,n-i}(-t).
$$
In particular, if $E_{A}=\emptyset$, then for $n\in\N$ we have $f_{A,n}(-t)=(-1)^{n}f_{A,n}(t)$.
\end{enumerate}
\end{thm}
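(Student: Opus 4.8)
The plan is to derive all four identities in a uniform way: each is the coefficient of $x^n$ in an elementary algebraic identity between formal power series in $x$ with coefficients in $\R[t]$, obtained by manipulating the bivariate generating function $C_A(t,x)=\frac{1}{1-tf_A(x)}=\sum_{n\ge0}f_{A,n}(t)x^n$. I will write $[x^n]F$ for the coefficient of $x^n$ in a power series $F$, so that $f_{A,n}(t)=[x^n]C_A(t,x)$, and use repeatedly the Cauchy-product rule $[x^n](FG)=\sum_{i=0}^n([x^i]F)([x^{n-i}]G)$. For part (1), I would start from $(1-tf_A(x))C_A(t,x)=1$, rewritten as $C_A(t,x)=1+tf_A(x)C_A(t,x)$; since $f_A(x)=\sum_{a\in A}x^a$, the Cauchy product gives $[x^n](tf_A(x)C_A(t,x))=t\sum_{a\in A,\,a\le n}f_{A,n-a}(t)=t\sum_{a\in A(n)}f_{A,n-a}(t)$, while $[x^n]1$ is $1$ for $n=0$ and $0$ for $n\ge1$. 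Comparing coefficients of $x^n$ gives $f_{A,0}(t)=1$ and the stated recurrence for $n\ge1$.

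For part (2), I would introduce the second Euler operator $\delta_x:=x\frac{\partial}{\partial x}$ alongside $\delta=t\frac{\partial}{\partial t}$ (the operator $\delta$ of Lemma~\ref{delta}, acting in the $t$-variable). Swapping the order of summation shows that the left-hand side of (2) is the $x^n$-coefficient of the product $\big(\sum_{a\in A}ax^a\big)\big(\sum_{m\ge0}\delta(f_{A,m})(t)x^m\big)=\big(xf_A'(x)\big)\,\delta C_A(t,x)$, and that the right-hand side of (2) is the $x^n$-coefficient of $\big(\sum_{a\in A}x^a\big)\big(\sum_{m\ge0}mf_{A,m}(t)x^m\big)=f_A(x)\,\delta_xC_A(t,x)$. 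Hence it suffices to check the power-series identity $xf_A'(x)\,\delta C_A=f_A(x)\,\delta_xC_A$. With $u:=tf_A(x)$ one has $C_A=(1-u)^{-1}$, $\delta C_A=\frac{u}{(1-u)^2}=\frac{tf_A(x)}{(1-u)^2}$ and $\delta_xC_A=\frac{\delta_xu}{(1-u)^2}=\frac{txf_A'(x)}{(1-u)^2}$, so both sides equal $\frac{txf_A(x)f_A'(x)}{(1-u)^2}$.

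Parts (3) and (4) are partial-fraction identities in $u:=tf_A(x)$ read off coefficientwise. Part (3) is just $\frac{1}{1-u}+\frac{1}{1+u}=\frac{2}{(1-u)(1+u)}$, i.e. $C_A(t,x)+C_A(-t,x)=2\,C_A(t,x)\,C_A(-t,x)$; extracting $[x^n]$ and applying the Cauchy-product rule (and reindexing the sum) gives $f_{A,n}(t)+f_{A,n}(-t)=2\sum_{i=0}^nf_{A,i}(-t)f_{A,n-i}(t)$. For part (4), the key point is the splitting $f_A(x)=f_{O_A}(x)+f_{E_A}(x)$ into odd- and even-degree parts, which gives $f_A(x)-f_A(-x)=2f_{O_A}(x)$, hence $\big(1-tf_A(-x)\big)+\big(1+tf_A(x)\big)=2\big(1+tf_{O_A}(x)\big)$. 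Dividing this equation by $2\big(1+tf_{O_A}(x)\big)\big(1+tf_A(x)\big)\big(1-tf_A(-x)\big)$ and recognising $C_{O_A}(-t,x)=\frac{1}{1+tf_{O_A}(x)}$, $C_A(-t,x)=\frac{1}{1+tf_A(x)}$, $C_A(t,-x)=\frac{1}{1-tf_A(-x)}$, one obtains
$$\tfrac12\,C_{O_A}(-t,x)\big(C_A(-t,x)+C_A(t,-x)\big)=C_A(t,-x)\,C_A(-t,x).$$
Taking $[x^n]$ of both sides, using $[x^n]C_A(t,-x)=(-1)^nf_{A,n}(t)$, $[x^n]C_{O_A}(-t,x)=f_{O_A,n}(-t)$ and the Cauchy-product rule, the right-hand side becomes $\sum_{i=0}^n(-1)^if_{A,i}(t)f_{A,n-i}(-t)$ and the left-hand side becomes $\tfrac12\sum_{i=0}^nf_{O_A,i}(-t)\big(f_{A,n-i}(-t)+(-1)^{n-i}f_{A,n-i}(t)\big)$, as required. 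When $E_A=\emptyset$ we have $f_A(-x)=-f_A(x)$, so $C_A(t,-x)=C_A(-t,x)$; comparing coefficients of $x^n$ gives $(-1)^nf_{A,n}(t)=f_{A,n}(t)$... rather $(-1)^nf_{A,n}(t)=f_{A,n}(-t)$ (equivalently $c_A(i,n)=0$ unless $i\equiv n\pmod 2$).

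None of this is deep, and the computations for (1)--(3) are routine. The one place that calls for care is the bookkeeping in part (4): keeping the three differently specialised series $C_A(t,x)$, $C_A(-t,x)$, $C_A(t,-x)$ straight, matching each Cauchy product to the correct sum, and attaching the sign factors $(-1)^i$ versus $(-1)^{n-i}$ to the correct factor. I expect that to be the main, and essentially only, obstacle.
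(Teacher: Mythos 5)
Your proposal is correct and follows essentially the same route as the paper: part (1) from $(1-tf_A(x))C_A(t,x)=1$, part (2) from the identity $tf_A'(x)\,\partial_t C_A=f_A(x)\,\partial_x C_A$ (your Euler-operator phrasing is the same computation), part (3) from $C_A(t,x)+C_A(-t,x)=2C_A(t,x)C_A(-t,x)$, and part (4) from $\frac{1}{C_A(-t,x)}+\frac{1}{C_A(t,-x)}=2(1+tf_{O_A}(x))$. The only blemish is the momentary slip at the end (correctly self-corrected to $f_{A,n}(-t)=(-1)^nf_{A,n}(t)$); everything else checks out.
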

\begin{proof}
 The first part of our theorem is an immediate consequence of the functional relation $(1-tf_{A}(x))C_{A}(t,x)=1$. Indeed, for $n\in\N_{+}$, the $n$-th term in the power series expansion of the left hand side is exactly the difference of $C_{A,n}(t)$ and $t\sum_{a\in A(n)}C_{A,n-a}(t)$, and hence the result.

 In order to get the second identity, let us observe the following equalities
 \begin{equation}\label{partialid}
 \frac{\partial C_{A}(t,x)}{\partial t}=\frac{f_{A}(x)}{(1-tf_{A}(x))^{2}},\quad\quad \frac{\partial C_{A}(t,x)}{\partial x}=\frac{tf'_{A}(x)}{(1-tf_{A}(x))^{2}}.
 \end{equation}
 In consequence, we get the identity
 $$
 tf_{A}'(x)\frac{\partial C_{A}(t,x)}{\partial t}=f_{A}(x)\frac{\partial C_{A}(t,x)}{\partial x}.
 $$
 Comparison of the terms near $x^{n}$ in the expansions
 \begin{align*}
 xtf_{A}'(x)\frac{\partial C_{A}(t,x)}{\partial t}&=\left(\sum_{a\in A}ax^{a}\right)\left(\sum_{n=0}^{\infty}tC_{A,n}'(t)x^{n}\right)=\sum_{n=0}^{\infty}\left(t\sum_{a\in A(n)}aC_{a,n-a}'(t)\right)x^{n},\\
 xf_{A}(x)\frac{\partial C_{A}(t,x)}{\partial x}&=\left(\sum_{a\in A}x^{a}\right)\left(\sum_{n=0}^{\infty}(n)C_{A,n}(t)x^{n}\right)\\
                                               &=\sum_{n=0}^{\infty}\left(\sum_{a\in A(n)}(n-a)C_{A,n-a}(t)\right)x^{n},
 \end{align*}
 gives the required identity.

 In order to prove (3) it is enough to note the functional identity
 $$
 C_{A}(t,x)+C_{A}(-t,x)=\frac{2}{1-t^2f_{A}^{2}(x)}=2C_{A}(t,x)C_{A}(-t,x).
 $$
 Comparing the terms on both sides of the expansions of $C_{A}(t,x)+C_{A}(-t,x)$ and $2C_{A}(t,x)C_{A}(-t,x)$ we get the result.

Finally, in order to get the last identity we note the relation $f_{A}(x)=f_{O_{A}}(x)+f_{E_{A}}(x)$ and $f_{A}(-x)=-2f_{O_{A}}(x)+f_{A}(x)$. From the shape of $C_{A}(t,x)$, we easily get the identity
$$
\frac{1}{C_{A}(-t,x)}+\frac{1}{C_{A}(t,-x)}=2(1+tf_{O_{A}}(x))
$$
and thus
$$
\frac{1}{2}\frac{1}{1+tf_{O_{A}}(x)}(C_{A}(-t,x)+C_{A}(t,-x))=C_{A}(-t,x)C_{A}(t,-x).
$$
Noting that $1/(1+tf_{O_{A}}(x))=C_{O_{A}}(-t,x)$ and comparing the terms near $x^{n}$ on both sides of the above identity, we get:
$$
\frac{1}{2}\sum_{i=0}^{n}f_{O_{A},i}(-t)(f_{A,n-i}(-t)+(-1)^{n-i}f_{A,n-i}(t))=\sum_{i=0}^{n}(-1)^{i}f_{A,i}(t)f_{A,n-i}(-t),
$$
which is exactly the identity we were looking for. If $E_{A}=\emptyset$, then $f_{E_{A}}(x)=0$ and $f_{A}(-x)=f_{O_{A}}(-x)=-f_{O_{A}}(x)=-f_{A}(x)$. In consequence $C_{A}(-t,x)=C_{A}(t,-x)$ and we get the result.

\end{proof}

In the second result we collect identities which will be useful in studying signs behaviour of the sequence $S_{A,k}(n)$ for given $k, n\in\N$.

\begin{thm}\label{tworec}
Let $A\subset\N_{+}$ and consider the power series expansion
$$
\frac{f_{A}(x)}{xf'_{A}(x)}=\sum_{n=0}^{\infty}q_{A}(n)x^{n}.
$$

We have the following identities
$$
\delta(f_{A,n})(t)=\sum_{i=0}^{n}iq_{A}(n-i)f_{A,i}(t)
$$
and
$$
\delta(f_{A,n})(t)=\sum_{i=0}^{n-1}f_{A,n-i}(t)f_{A,i}(t).
$$
\end{thm}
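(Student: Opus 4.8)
The plan is to obtain both identities from the bivariate generating function $C_{A}(t,x)=1/(1-tf_{A}(x))=\sum_{n\ge 0}f_{A,n}(t)x^{n}$ together with the formulas for its partial derivatives already recorded in $(\ref{partialid})$, and then to extract in each case the coefficient of $x^{n}$. Throughout I would use that $\delta(f_{A,n})(t)$ is the coefficient of $x^{n}$ in $t\,\partial C_{A}(t,x)/\partial t$, which by $(\ref{partialid})$ equals $tf_{A}(x)/(1-tf_{A}(x))^{2}$, and that $nf_{A,n}(t)$ is the coefficient of $x^{n}$ in $x\,\partial C_{A}(t,x)/\partial x=xtf'_{A}(x)/(1-tf_{A}(x))^{2}$.

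For the first identity I would simply divide these two expressions. Since
$$
t\frac{\partial C_{A}(t,x)}{\partial t}=\frac{f_{A}(x)}{xf'_{A}(x)}\cdot x\frac{\partial C_{A}(t,x)}{\partial x}=\left(\sum_{n=0}^{\infty}q_{A}(n)x^{n}\right)\left(\sum_{n=0}^{\infty}nf_{A,n}(t)x^{n}\right),
$$
comparing the coefficient of $x^{n}$ on both sides gives $\delta(f_{A,n})(t)=\sum_{i=0}^{n}iq_{A}(n-i)f_{A,i}(t)$. A small point worth recording here is that $f_{A}(x)/(xf'_{A}(x))$ really is a formal power series: if $m=\op{min}(A)$ then $f_{A}(x)$ and $xf'_{A}(x)$ have the same $x$-adic valuation $m$, so the quotient is a power series with constant term $1/m$, which is what makes $q_{A}(n)$ well defined.

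For the second identity the key observation is the elementary rational-function identity $u/(1-u)^{2}=1/(1-u)^{2}-1/(1-u)$. Taking $u=tf_{A}(x)$ turns it into
$$
t\frac{\partial C_{A}(t,x)}{\partial t}=\frac{tf_{A}(x)}{(1-tf_{A}(x))^{2}}=C_{A}(t,x)^{2}-C_{A}(t,x).
$$
Since $C_{A}(t,x)^{2}=\sum_{n=0}^{\infty}\left(\sum_{i=0}^{n}f_{A,i}(t)f_{A,n-i}(t)\right)x^{n}$, comparing coefficients of $x^{n}$ yields $\delta(f_{A,n})(t)+f_{A,n}(t)=\sum_{i=0}^{n}f_{A,i}(t)f_{A,n-i}(t)$. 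Peeling off the $i=n$ summand on the right and using $f_{A,0}(t)=1$ from Theorem \ref{genthm}(1), the right-hand side equals $f_{A,n}(t)+\sum_{i=0}^{n-1}f_{A,i}(t)f_{A,n-i}(t)$; the two copies of $f_{A,n}(t)$ cancel, leaving $\delta(f_{A,n})(t)=\sum_{i=0}^{n-1}f_{A,n-i}(t)f_{A,i}(t)$.

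Since everything reduces to manipulations of rational functions in $x$ followed by coefficient comparison, I do not expect a genuine difficulty; the trickiest (and still minor) points are the well-definedness of the auxiliary series $f_{A}(x)/(xf'_{A}(x))$ noted above and the harmless fact that the $i=0$ summand of the first identity vanishes thanks to the factor $i$, so that the displayed formula does agree with the honest convolution coefficient. As a cross-check one could instead rederive the first identity by combining Theorem \ref{genthm}(1) with Theorem \ref{genthm}(2), but the generating-function computation above is shorter and more transparent.
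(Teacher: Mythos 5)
Your proposal is correct and follows essentially the same route as the paper: both identities are extracted from $t\,\partial C_{A}(t,x)/\partial t$ via the relations in (\ref{partialid}), the first by dividing the two partial-derivative expressions to introduce $\sum q_{A}(n)x^{n}$, the second via $u/(1-u)^{2}=1/(1-u)^{2}-1/(1-u)$ with $u=tf_{A}(x)$ and the cancellation of the $i=n$ term using $f_{A,0}(t)=1$. Your added remark on the well-definedness of $f_{A}(x)/(xf'_{A}(x))$ as a formal power series is a sensible clarification the paper leaves implicit.
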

\begin{proof}
To get the first equality let us note that from the identities (\ref{partialid}) we get that
\begin{align*}
\sum_{n=0}^{\infty}&\delta(f_{A,n})(t)x^{n}=t\frac{\partial C_{A}(t,x)}{\partial t}=\frac{f_{A}(x)}{xf'_{A}(x)}\cdot x\frac{\partial C_{A}(t,x)}{\partial x}\\
                   &=\left(\sum_{n=0}^{\infty}q_{A}(n)x^{n}\right)\left(\sum_{n=0}^{\infty}nf_{A,n}(t)x^{n}\right)=\sum_{n=0}^{\infty}\left(\sum_{i=0}^{n}iq_{A}(n-i)f_{A,i}(t)\right)x^{n}.
\end{align*}
Comparing the coefficients on penultimate sides of the above chain of equalities we get the result.

To get the second identity we recall that $f_{A,0}(t)=1$ and note the following
\begin{align*}
\sum_{n=0}^{\infty}&\delta(f_{A,n})(t)x^{n}=t\frac{\partial C_{A}(t,x)}{\partial t}=\frac{tf_{A}(x)}{(1-tf_{A}(x))^{2}}\\
                   &=\frac{1}{(1-tf_{A}(x))^{2}}-\frac{1}{1-tf_{A}(x)}=\sum_{n=0}^{\infty}\left(\sum_{i=0}^{n}f_{A,i}(t)f_{A,n-i}(t)-f_{A,n}(t)\right)x^{n}\\
                   &=\sum_{n=0}^{\infty}\left(\sum_{i=0}^{n-1}f_{A,i}(t)f_{A,n-i}(t)\right)x^{n}.
\end{align*}
Again, comparing coefficients on penultimate sides, we get the result.
\end{proof}

Having proved Theorem \ref{tworec} as an immediate consequence we get the following two recurrence relations satisfied by the double sequence $(S_{A,k}(n))_{k,n}$.

\begin{cor}\label{prodrec}
Let $k, n\in\N$. We have the following identities:
$$
S_{A,k+1}(n)=\sum_{i=0}^{n}iq_{A}(n-i)S_{A,k}(i),
$$
and
$$
S_{A,k+1}(n)=\sum_{i=0}^{n-1}\sum_{j=0}^{k}\binom{k}{j}S_{A,j}(n-i)S_{A,k-j}(i).
$$
In particular, if $(-1)^{n}S_{A,0}(n)\geq 0$ then for each $k\in\N$ we have $(-1)^{n}S_{A,k}(n)\geq 0$.
\end{cor}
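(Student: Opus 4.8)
The plan is to apply the operator $\delta^{(k)}$ to both identities of Theorem~\ref{tworec} and then specialize at $t=-1$, using the relation $S_{A,k}(n)=\delta^{(k)}(f_{A,n})\mid_{t=-1}$ recorded just after Lemma~\ref{delta}. For the first identity, linearity of $\delta$ (Lemma~\ref{delta}(1)) immediately gives $\delta^{(k+1)}(f_{A,n})(t)=\sum_{i=0}^{n}iq_{A}(n-i)\,\delta^{(k)}(f_{A,i})(t)$, and evaluating at $t=-1$ yields $S_{A,k+1}(n)=\sum_{i=0}^{n}iq_{A}(n-i)S_{A,k}(i)$. For the second identity, I would apply $\delta^{(k)}$ together with the Leibniz-type formula of Lemma~\ref{delta}(3) to each product $f_{A,n-i}(t)f_{A,i}(t)$, obtaining $\delta^{(k+1)}(f_{A,n})(t)=\sum_{i=0}^{n-1}\sum_{j=0}^{k}\binom{k}{j}\delta^{(j)}(f_{A,n-i})(t)\,\delta^{(k-j)}(f_{A,i})(t)$; setting $t=-1$ then produces the claimed convolution-like identity $S_{A,k+1}(n)=\sum_{i=0}^{n-1}\sum_{j=0}^{k}\binom{k}{j}S_{A,j}(n-i)S_{A,k-j}(i)$.

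For the final assertion I would argue by induction on $k$, the base case $k=0$ being exactly the hypothesis $(-1)^{n}S_{A,0}(n)\geq 0$ for all $n$. Assume $(-1)^{m}S_{A,j}(m)\geq 0$ for every $m\in\N$ and every $0\leq j\leq k$. In the second identity each summand satisfies $(-1)^{n}S_{A,j}(n-i)S_{A,k-j}(i)=\bigl((-1)^{n-i}S_{A,j}(n-i)\bigr)\bigl((-1)^{i}S_{A,k-j}(i)\bigr)\geq 0$ by the inductive hypothesis, the key point being that the two parities $(-1)^{n-i}$ and $(-1)^{i}$ combine to the single parity $(-1)^{n}$ regardless of $i$. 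Since the binomial coefficients $\binom{k}{j}$ are non-negative, multiplying the identity through by $(-1)^{n}$ gives $(-1)^{n}S_{A,k+1}(n)\geq 0$, which closes the induction.

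The whole argument is a formal bookkeeping exercise, so I do not expect a genuine obstacle; the only steps that need a little care are the correct invocation of the Leibniz rule of Lemma~\ref{delta}(3) when passing from the product $f_{A,n-i}f_{A,i}$ to its $\delta^{(k)}$-image, and the parity tracking in the sign step just described. I would also remark that identity~(1), although the cleaner of the two, is \emph{not} directly usable for the sign conclusion, since $q_{A}(n-i)$ need not have a fixed sign; this is precisely why the induction has to be routed through identity~(2).
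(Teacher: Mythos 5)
Your proposal is correct and follows essentially the same route as the paper: apply $\delta^{(k)}$ to the two identities of Theorem~\ref{tworec} (using linearity for the first and Lemma~\ref{delta}(3) for the second), evaluate at $t=-1$, and prove the sign statement by induction on $k$ via the second identity with the parity split $(-1)^{n}=(-1)^{n-i}(-1)^{i}$. Your closing remark about why the first identity cannot drive the sign argument also matches the remark the authors make immediately after the corollary.
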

\begin{proof}
The first identity is a simple induction on $k$. Indeed, we note that the statement is true for $k=0$. Next, recall that $S_{A,k}(n)=\delta^{(k)}f_{A,n}(t)|_{t=-1}$. Applying the operator $\delta^{(k)}$ to the first identity from Theorem \ref{tworec} and taking $t=-1$ we get the result.

To get the second identity we use the same approach together with Lemma \ref{delta} (3) and by taking $t=-1$ we get the result.

Finally, the "in particular" part follows from a simple induction on $k$. The case $k=0$ is our assumption. Suppose that our statement is true for each integer $m<k$, i.e., for each $n$ we have $(-1)^{n}S_{A,m}(n)\geq 0$. Then we have
$$
(-1)^{n}S_{A,k}(n)=\sum_{i=0}^{n-1}\sum_{j=0}^{k-1}\binom{k-1}{j}[(-1)^{n-i}S_{A,j}(n-i)][(-1)^{i}S_{A,k-j}(i)]\geq 0
$$
and our result follows.
\end{proof}

\begin{rem}
{\rm The second recurrence relation given in Corollary \ref{prodrec} will be of great use in the sequel. On the other side, it seems that the first one, although interesting from a theoretical point of view, is of little use. The problem is that we are not able to control the sign behaviour of the sequence $(q_{A}(n))_{n\in\N}$. Indeed, consider the set $A=\{1, 2, 3\}$. We have $f_{A}(x)=x+x^2+x^3$ and
$$
\frac{f_{A}(x)}{xf_{A}'(x)}=\frac{x^2+x+1}{3 x^2+2 x+1}=\sum_{n=0}^{\infty}q_{A}(n)x^{n}.
$$
From Theorem \ref{negthm} we easily get that the sequence $(\op{sign}(q_{A}(n)))_{n\in\N}$ is not periodic. On the other hand, from Proposition \ref{12m} below, applied to $m=3$, we get that $(-1)^{n}S_{A,k}(n)\geq 0$.
}
\end{rem}
\section{Results}\label{sec3}

The aim of this section is to present several classes of sets $A$ having the property $(-1)^{n}S_{A,k}(n)\geq 0$ for each $k, n\in\N$. We start with the following general observation.

\begin{cor}\label{oddset}
Let $A\subset 2\N+1$. Then for each $k, n\in\N$ we have $(-1)^{n}S_{A,k}(n)\geq 0$.
\end{cor}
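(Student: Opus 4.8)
The plan is to reduce the statement to two facts already in hand: the parity identity of Theorem~\ref{genthm}(4) and the monotonicity-in-$k$ principle at the end of Corollary~\ref{prodrec}. So the only genuine step is to settle the base case $k=0$, and even that will be immediate.

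First I would observe that the hypothesis $A\subset 2\N+1$ says exactly that, in the notation of Theorem~\ref{genthm}(4), we have $E_{A}=\emptyset$ (every part is odd). The ``in particular'' clause of that theorem then yields, for every $n\in\N$,
$$
f_{A,n}(-t)=(-1)^{n}f_{A,n}(t).
$$
Next I would evaluate the base case: since $S_{A,0}(n)=\delta^{(0)}(f_{A,n}(t))\mid_{t=-1}=f_{A,n}(-1)$, plugging $t=1$ into the displayed identity gives $f_{A,n}(-1)=(-1)^{n}f_{A,n}(1)=(-1)^{n}c_{A}(n)$, where I use $f_{A,n}(1)=c_{A}(n)$. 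Hence $(-1)^{n}S_{A,0}(n)=c_{A}(n)\geq 0$, because $c_{A}(n)$ is a count of $A$-compositions. Finally I would apply the last sentence of Corollary~\ref{prodrec}: from $(-1)^{n}S_{A,0}(n)\geq 0$ it follows that $(-1)^{n}S_{A,k}(n)\geq 0$ for every $k\in\N$, which is the assertion.

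There is no real obstacle here; the entire content is the parity identity $f_{A,n}(-t)=(-1)^{n}f_{A,n}(t)$, which in turn rests on the elementary observation that $C_{A}(-t,x)=C_{A}(t,-x)$ when $f_{A}$ is an odd power series. If one prefers to avoid citing the ``in particular'' part of Corollary~\ref{prodrec}, the same conclusion can be reached by applying $\delta^{(k)}$ directly to the parity identity: since $\delta$ commutes with the substitution $t\mapsto -t$ (as $\delta(f(-t))=-tf'(-t)=(\delta f)(-t)$), one gets $(\delta^{(k)}f_{A,n})(-1)=(-1)^{n}(\delta^{(k)}f_{A,n})(1)$, so that by Lemma~\ref{delta}(4),
$$
(-1)^{n}S_{A,k}(n)=(\delta^{(k)}f_{A,n})(1)=\sum_{i=0}^{n}i^{k}c_{A}(i,n)\geq 0,
$$
all summands being non-negative. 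Either route is short; I would present the first, as it fits the flow from Corollary~\ref{prodrec}.
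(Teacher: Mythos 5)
Your main argument is correct and is essentially the paper's proof: the base case $(-1)^{n}S_{A,0}(n)=c_{A}(n)\geq 0$ follows from the oddness of $f_{A}$ (the paper gets it by writing $\frac{1}{1+f_{A}(-x)}=\frac{1}{1-f_{A}(x)}$ at the level of generating functions, you get it from the ``in particular'' clause of Theorem~\ref{genthm}(4), which encodes the same fact), and the passage to general $k$ is the last part of Corollary~\ref{prodrec} in both cases. Your alternative route is also correct and is worth noting: since $\delta$ commutes with $t\mapsto -t$, the parity identity gives $(-1)^{n}S_{A,k}(n)=\sum_{i=0}^{n}i^{k}c_{A}(i,n)\geq 0$ directly for all $k$ at once, bypassing the induction of Corollary~\ref{prodrec} entirely and yielding the stronger statement that $(-1)^{n}S_{A,k}(n)$ equals a manifestly non-negative combinatorial sum (reflecting that $c_{A}(i,n)=0$ unless $i\equiv n\pmod{2}$ when all parts are odd).
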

\begin{proof}
Let $f_{A}(x)=\sum_{a\in A}x^{a}$. Because $A\subset 2\N+1$ we have $f_{A}(-x)=-f_{A}(x)$. Let us recall that $S_{A,0}(n)=f_{A,n}(-1)$. We thus have
$$
\sum_{n=0}^{\infty}(-1)^{n}S_{A,0}(n)x^{n}=\sum_{n=0}^{\infty}S_{A,0}(n)(-x)^{n}=\frac{1}{1+f_{A}(-x)}=\frac{1}{1-f_{A}(x)}=\sum_{n=0}^{\infty}c_{A}(n)x^{n}.
$$
We thus have $(-1)^{n}S_{A,0}(n)=c_{A}(n)\geq 0$ and hence the result for $k=0$. Applying second part of Corollary \ref{prodrec} we get that $(-1)^{n}S_{A,k}(n)\geq 0$ for each $k, n\in\N$.
\end{proof}

\begin{rem}
{\rm It is interesting fact that exactly the same result is true if we consider {\it partitions} with parts in the set $A$, instead of compositions with parts in the set $A$ (see \cite[Lemma 3.3]{GU}).}
\end{rem}

In the next proposition we write $a_{m}$ for the sequence $(a,\ldots, a)$ containing exactly $m$ occurrences of the symbol $a$.

\begin{prop}\label{12m}
Let $m\in\N_{+}$ be fixed and put $A=\{1,\ldots, m\}$.
\begin{enumerate}
\item If $m$ is odd then the sequence $((-1)^{n}S_{A,0}(n))_{n\in\N}$ is periodic of period $m+1$. More precisely,
$$
(\op{sing}((-1)^{n}S_{A,0}(n)))_{n\in\N}=\overline{(1, 1, 0_{m-1})}
$$
and we have $(-1)^{n}S_{A,k}(n)\geq 0$ for each $k, n\in\N$.
\item If $m$ is even then the sequence $((-1)^{n}S_{A,0}(n))_{n\in\N}$ is periodic of period $2(m+1)$. More precisely,
$$
(\op{sign}((-1)^{n}S_{A,0}(n)))_{n\in\N}=\overline{(1, 1, 0_{m-1},-1 , -1, 0_{m-1})}.
$$
\end{enumerate}
\end{prop}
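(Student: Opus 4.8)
The plan is to obtain a closed form for the ordinary generating function of the sequence $((-1)^{n}S_{A,0}(n))_{n\in\N}$ and then read off its coefficients directly. Recall the identity $S_{A,k}(n)=\delta^{(k)}(f_{A,n}(t))\mid_{t=-1}$ noted after Lemma \ref{delta}; in particular $S_{A,0}(n)=f_{A,n}(-1)$, so
$$
\sum_{n=0}^{\infty}(-1)^{n}S_{A,0}(n)x^{n}=\sum_{n=0}^{\infty}f_{A,n}(-1)(-x)^{n}=C_{A}(-1,-x)=\frac{1}{1+f_{A}(-x)}.
$$
For $A=\{1,\ldots,m\}$ we have $f_{A}(x)=x+x^{2}+\cdots+x^{m}=\frac{x-x^{m+1}}{1-x}$, whence a short computation gives $f_{A}(-x)=\frac{-x+(-1)^{m}x^{m+1}}{1+x}$, and therefore the key identity
$$
1+f_{A}(-x)=\frac{1+(-1)^{m}x^{m+1}}{1+x},\qquad\text{equivalently}\qquad \frac{1}{1+f_{A}(-x)}=\frac{1+x}{1+(-1)^{m}x^{m+1}}.
$$

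Next I would split according to the parity of $m$ and expand the geometric series. If $m$ is odd, then $(-1)^{m}=-1$ and
$$
\frac{1+x}{1-x^{m+1}}=(1+x)\sum_{j=0}^{\infty}x^{(m+1)j}=\sum_{j=0}^{\infty}\left(x^{(m+1)j}+x^{(m+1)j+1}\right),
$$
so $(-1)^{n}S_{A,0}(n)=1$ when $n\equiv 0,1\pmod{m+1}$ and $(-1)^{n}S_{A,0}(n)=0$ otherwise; this is exactly the periodic pattern $\overline{(1,1,0_{m-1})}$ of period $m+1$. Since then $(-1)^{n}S_{A,0}(n)\geq 0$ for all $n$, the last assertion of Corollary \ref{prodrec} immediately yields $(-1)^{n}S_{A,k}(n)\geq 0$ for every $k,n\in\N$, which finishes part (1). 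If $m$ is even, then $(-1)^{m}=1$ and
$$
\frac{1+x}{1+x^{m+1}}=(1+x)\sum_{j=0}^{\infty}(-1)^{j}x^{(m+1)j}=\sum_{j=0}^{\infty}(-1)^{j}\left(x^{(m+1)j}+x^{(m+1)j+1}\right);
$$
writing $n=(m+1)j+r$ with $r\in\{0,1\}$ shows $(-1)^{n}S_{A,0}(n)=(-1)^{j}$ when $n\equiv 0,1\pmod{m+1}$ and $0$ otherwise, which is the claimed pattern $\overline{(1,1,0_{m-1},-1,-1,0_{m-1})}$ of period $2(m+1)$. This proves part (2).

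There is no genuine obstacle here: the whole argument reduces to the closed-form evaluation $1+f_{A}(-x)=(1+(-1)^{m}x^{m+1})/(1+x)$, followed by a geometric-series expansion and elementary bookkeeping of residues modulo $m+1$. The only place demanding a little care is keeping track of the factor $(-1)^{m}$, which is precisely what produces the dichotomy between the two cases and explains why in the even case $(-1)^{n}S_{A,0}(n)$ does attain negative values (at $n=m+1$), so that the implication of Corollary \ref{prodrec} is not available there. It is also worth sanity-checking the small degenerate cases, e.g. $m=1$ (where $\overline{(1,1,0_{m-1})}=\overline{(1,1)}$ and indeed $1/(1+f_{A}(-x))=1/(1-x)$) and $m=2$ (where $1/(1+f_{A}(-x))=(1+x)/(1+x^{3})$ produces the length-$6$ pattern), both in agreement with the stated formulas.
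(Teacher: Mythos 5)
Your proposal is correct and follows essentially the same route as the paper: evaluate $\sum_{n}(-1)^{n}S_{A,0}(n)x^{n}=1/(1+f_{A}(-x))=(1+x)/(1+(-1)^{m}x^{m+1})$ and read off the coefficients, invoking Corollary \ref{prodrec} in the odd case. The only (cosmetic) difference is in the even case, where you expand $1/(1+x^{m+1})$ directly as an alternating geometric series, while the paper rationalizes to the denominator $1-x^{2(m+1)}$ and tracks indicator functions; your bookkeeping is the slightly cleaner of the two.
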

\begin{proof}
We have $f_{A}(x)=\sum_{i=1}^{m}x^{i}=x\frac{1-x^{m}}{1-x}$.

If $m$ is odd, then we have
\begin{align*}
\sum_{n=0}^{\infty}(-1)^{n}S_{A,0}(n)x^{n}&=F(-1,-x)=\frac{1}{1+f_{A}(-x)}=\frac{1+x}{1-x^{m+1}}\\
                                  &=1+\sum_{n=1}^{\infty}(I_{m}(n)+I_{m}(n-1))x^{n},
\end{align*}
where $I_{m}(n)=1$ if $m+1|n$ and 0 otherwise. It is clear that the expression $I_{m}(n)+I_{m}(n-1)$ is non-negative and thus $(-1)^{n}S_{A,0}(n)\geq 0$. Moreover, $(-1)^{n}S_{A,0}(n)=I_{m}(n)+I_{m}(n-1)=1$ if and only if $n\equiv 0, 1\pmod*{m}$. Thus, we get the first part of our statement. Applying Corollary \ref{prodrec} we get that $(-1)^{n}S_{A,k}(n)\geq 0$ for each $k, n\in\N$.

If $m$ is even, then we have
\begin{align*}
\sum_{n=0}^{\infty}&(-1)^{n}S_{A,0}(n)x^{n}=F(-1,-x)=\frac{1}{1+f_{A}(-x)}=\frac{1+x}{1+x^{m+1}}\\
                   &=\frac{(1+x)(1-x^{m+1})}{1-x^{2(m+1)}}=\sum_{n=0}^{\infty}c_{m}(n)x^{n},
\end{align*}
where $c_{m}(n)=J_{m}(n)+J_{m}(n-1)-J_{m}(n-m-1)-J_{m}(n-m-2)$ and $J_{m}(n)=1$ if $2(m+1)|n$ and 0 otherwise. A careful analysis of the expression for $c_{m}(n)$ for $n=2(m+1)p+q, q\in\{0, 1, \ldots, 2(m+1)-1\}, p\in\N$, reveals that
$$
c_{m}(n)=\begin{cases}\begin{array}{lll}
                        1 & \mbox{for} & n\equiv 0, 1\pmod*{2(m+1)}, \\
                        -1 & \mbox{for} & n\equiv m+1, m+2\pmod*{2(m+1)}, \\
                        0 &  & \mbox{otherwise},
                      \end{array}
\end{cases}
$$
which is exactly the form from the second part of our theorem.
\end{proof}

\begin{rem}
{\rm Let $A=\{1,\ldots, m\}$, where $m$ is even. We proved that for $k=0$, the sequence $(\op{sign}((-1)^{n}S_{A,0}(n)))_{n\in\N}$ is periodic with a pure period $2(m+1)$. Our numerical calculations suggest that this behaviour persist also for $k>0$. We formulate the following.

\begin{conj}
Let $A=\{1,\ldots, m\}$, where $m$ is even. The sequence $((-1)^{n}S_{A, k}(n))_{n\in\N}$ is eventually periodic of period length $2(m+1)$. If $k\geq 2(m+1)$, then the period $T_{m}$ has the form $T_{m}=(1_{m+1},(-1)_{m+1})$.
\end{conj}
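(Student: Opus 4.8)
The plan is to pass to generating functions and read the sign of $(-1)^{n}S_{A,k}(n)$ off the poles of a rational function. Since $S_{A,k}(n)=\delta^{(k)}(f_{A,n})(t)|_{t=-1}$, applying $\delta^{(k)}$ to $C_{A}(t,x)=\sum_{j\ge 0}(tf_{A}(x))^{j}$ and using the classical identity $\sum_{j\ge 0}j^{k}z^{j}=A_{k}(z)/(1-z)^{k+1}$ (with $A_{k}$ the $k$-th Eulerian polynomial, $\deg A_{k}=k$, $A_{k}(1)=k!$), one gets
$$
\sum_{n=0}^{\infty}S_{A,k}(n)x^{n}=\frac{A_{k}(-f_{A}(x))}{(1+f_{A}(x))^{k+1}}.
$$
For $A=\{1,\dots,m\}$ with $m$ even, $1+f_{A}(-x)=\sum_{i=0}^{m}(-x)^{i}=\frac{1+x^{m+1}}{1+x}=:\Phi(x)$, hence
$$
\sum_{n=0}^{\infty}(-1)^{n}S_{A,k}(n)x^{n}=\frac{A_{k}\bigl(1-\Phi(x)\bigr)}{\Phi(x)^{k+1}}.
$$

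Next I would extract the quasi-polynomial structure. The polynomial $\Phi$ divides $x^{2(m+1)}-1$ and has only simple roots, all $2(m+1)$-th roots of unity (precisely the $(m+1)$-th roots of $-1$), while $A_{k}(1-\Phi(\rho))=A_{k}(1)=k!\ne 0$ at each such $\rho$; thus the displayed rational function is proper with poles exactly at these roots, each of order $k+1$. A partial-fraction decomposition then produces functions $b_{0},\dots,b_{k}\colon\Z/2(m+1)\Z\to\R$ with $(-1)^{n}S_{A,k}(n)=\sum_{j=0}^{k}b_{j}(n\bmod 2(m+1))\binom{n+j}{j}$ for all $n\ge 0$, so $(-1)^{n}S_{A,k}(n)$ is a quasi-polynomial of degree $\le k$ and period $2(m+1)$. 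Moreover $b_{j}(r+(m+1))=-b_{j}(r)$ for all $j$ because $\rho^{-(m+1)}=-1$ for every pole $\rho$. Consequently (provided not all $b_{j}(r)$ vanish for a given residue $r$) the sign of $(-1)^{n}S_{A,k}(n)$ is eventually governed by the leading non-vanishing $b_{j}(r)$, the sign sequence is eventually periodic with period dividing $2(m+1)$, and one period has the anti-symmetric shape $(\eps_{0},\dots,\eps_{m},-\eps_{0},\dots,-\eps_{m})$ with $\eps_{r}\in\{\pm 1\}$. This already yields the first assertion.

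For the precise shape I would compute the top coefficient. Collecting the order-$(k+1)$ parts of the poles and using $\Phi'(\rho)=-(m+1)/(\rho(1+\rho))$, one finds for $r\in\{0,\dots,m\}$ that $b_{k}(r)$ is a positive multiple of $\sum_{\rho\,:\,\Phi(\rho)=0}(1+\rho)^{k+1}\rho^{-r}=(m+1)\,T_{k}(r)$, where $T_{k}(r):=\sum_{j\ge 0}(-1)^{j}\binom{k+1}{r+j(m+1)}$. A roots-of-unity rewriting gives $T_{k}(r)=\frac{1}{m+1}\sum_{t\ \mathrm{odd}}(1+\omega^{t})^{k+1}\omega^{-tr}$ with $\omega=e^{\pi i/(m+1)}$, i.e. a sum over conjugate pairs of terms $\bigl(2\cos\tfrac{t\pi}{2(m+1)}\bigr)^{k+1}\cos\!\bigl(\tfrac{(k+1-2r)t\pi}{2(m+1)}\bigr)$; for $m=2$ this is $\tfrac{2}{3}3^{(k+1)/2}\cos\!\bigl(\tfrac{\pi(k+1-2r)}{6}\bigr)$. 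The dominant contribution comes from the pair of roots $\rho$ closest to $1$, whose cosine factor $\cos\!\bigl(\tfrac{(k+1-2r)\pi}{2(m+1)}\bigr)$ changes sign at most once as $r$ runs through $0,1,\dots,m$ (its argument sweeps a range $m\pi/(m+1)<\pi$). So once this term genuinely dominates, $(\eps_{r})_{r=0}^{m}$ has at most one sign change, which — given the anti-symmetry — is exactly equivalent to the period being a block of $m+1$ ones and a block of $m+1$ minus-ones read cyclically (literally $(1_{m+1},(-1)_{m+1})$ in the favourable residues). Where the dominant cosine vanishes, so $b_{k}(r)=0$, one descends to $b_{k-1}(r)$ (which brings in $A_{k}'(1)=\tfrac12(k+1)!$), and if necessary lower, checking that the sign produced still keeps the number of sign changes $\le 1$.

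The hard part is exactly this last analysis: one must control, uniformly in $m$ and $r$, both (i) how large $k$ must be for the ``closest pair'' term to outweigh all the others — this is presumably where the hypothesis $k\ge 2(m+1)$ enters, since the ratio $\cos\tfrac{t\pi}{2(m+1)}/\cos\tfrac{(t+2)\pi}{2(m+1)}$ degrades as $m$ grows — and (ii) the tie-breaking at residues where the top coefficient vanishes. The $m=2$ computations already show the subtlety: for $k=6$ one gets $(T_{6}(0),T_{6}(1),T_{6}(2))=(-27,-27,0)$, so the period is then a cyclic rotation of $(1,1,1,-1,-1,-1)$ rather than that word on the nose, and for small $k$ the closest pair need not dominate at all. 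I expect the cleanest route to be to prove the ``at most one sign change'' statement for the leading term first (a purely trigonometric inequality about $\cos$ on an interval of length $<\pi$), and then to bound the error terms and the lower coefficients so that they can only matter at the residue caught inside that single sign change; carrying out this estimate, and pinning down the exact dependence of the admissible $k$ on $m$, is where essentially all the work lies.
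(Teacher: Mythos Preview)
The statement you are addressing is stated in the paper as a \emph{conjecture}; the authors offer no proof, only the remark that numerical calculations suggest it. So there is no ``paper's own proof'' to compare against, and anything correct in your proposal is genuine progress beyond the paper.

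Your generating-function set-up is correct and already settles the first assertion. The identity
\[
\sum_{n\ge 0}(-1)^{n}S_{A,k}(n)\,x^{n}\;=\;\frac{A_{k}\bigl(1-\Phi(x)\bigr)}{\Phi(x)^{k+1}},\qquad \Phi(x)=\frac{1+x^{m+1}}{1+x},
\]
together with the fact that the zeros of $\Phi$ are exactly the $(m+1)$-th roots of $-1$ other than $-1$, shows that $(-1)^{n}S_{A,k}(n)$ is a quasi-polynomial of degree $\le k$ and period $2(m+1)$. Your observation $\rho^{m+1}=-1$ then gives $b_{j}(r+m+1)=-b_{j}(r)$, so the eventual sign sequence is $2(m+1)$-periodic of the anti-symmetric shape $(\eps_{0},\dots,\eps_{m},-\eps_{0},\dots,-\eps_{m})$. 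This is a clean proof of the first clause of the conjecture (read, as it must be, as a statement about signs), and it is more than the paper provides.

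For the second clause your plan is sensible but, as you acknowledge, incomplete; it is open in the paper as well. One point worth flagging: your own computation for $m=2$, $k=6$ actually bears on the \emph{formulation} of the conjecture. You find $T_{6}(0)=T_{6}(1)=-27<0$, hence $b_{6}(0),b_{6}(1)<0$, so for all large $n\equiv 0,1\pmod{6}$ one has $(-1)^{n}S_{A,6}(n)<0$; a direct check at $n=6$ already gives $(-1)^{6}S_{A,6}(6)=-7622$. Thus the eventual period, read from $n\equiv 0$, is $((-1)_{3},1_{3})$ rather than $(1_{3},(-1)_{3})$. So either the conjectured period is intended only up to a cyclic shift, or the stated alignment is off by $m+1$; your argument does not merely fall short of the second clause, it sharpens it. The substantive remaining work --- making the ``pair of roots closest to $1$ dominates'' estimate effective (presumably this is where $k\ge 2(m+1)$ should enter) and resolving the residues with $b_{k}(r)=0$ via the lower $b_{j}$ --- is exactly where you say it is.
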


}
\end{rem}

\bigskip

\begin{rem}
{\rm An infinite version of Proposition \ref{12m}, i.e., the case $m=+\infty$, or equivalently $A=\N_{+}$ is trivial. Indeed, in this case $f_{A}(x)=x/(1-x)$ and 
$$
C_{A}(t, x)=\frac{1-x}{1-(1+t)x}.
$$
In particular, $C_{A}(-1, x)=1-x$ and thus $S_{A, 0}(n)=0$ for $n\geq 2$. As an immediate consequence of this property and Theorem \ref{prodrec}, we get that $S_{A, k}(n)=0$ for $n>k+2$. This shows a striking difference between the case of $A$-compositions and $A$-partitions. Indeed, it is conjectured in \cite{GU} that the sequence sign behaviour of the sequence $\left(\sum_{i=0}^{n}(-1)^{i}p_{A}(i,n)\right)_{n\in\N}$ is not eventually periodic.
}
\end{rem}
\bigskip

In Corollary \ref{oddset} we proved that if $A\subset 2\N+1$, then $(-1)^{n}S_{A,k}(n)\geq 0$ for each $k, n\in\N$. In the next theorem, we present a broad class of infinite sets $A\subset \N_{+}$ containing even integers such that $(-1)^{n}S_{A,k}(n)\geq 0$ for each $k, n\in\N$. More precisely, we have the following

\begin{thm}\label{oddeven1}
Let $E\subset 2\N_{+}$ and put $A=\N_{+}\setminus E$. Then, for each $k, n\in\N$ we have $(-1)^{n}S_{A,k}(n)\geq 0$.
\end{thm}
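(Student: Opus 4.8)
The plan is to reduce everything to the case $k=0$ and then to recognize the relevant generating function as one with manifestly non-negative coefficients. By the ``in particular'' clause of Corollary \ref{prodrec}, it suffices to prove that $(-1)^{n}S_{A,0}(n)\geq 0$ for all $n\in\N$; the statement for general $k$ then follows automatically. Since $S_{A,0}(n)=f_{A,n}(-1)$ and $\sum_{n\geq 0}f_{A,n}(-1)x^{n}=C_{A}(-1,x)=1/(1+f_{A}(x))$, substituting $x\mapsto -x$ gives
$$
\sum_{n=0}^{\infty}(-1)^{n}S_{A,0}(n)x^{n}=\frac{1}{1+f_{A}(-x)},
$$
so the whole theorem reduces to showing this power series has non-negative coefficients.

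Next I would exploit the complementary structure $A=\N_{+}\setminus E$. Writing $f_{E}(x)=\sum_{e\in E}x^{e}$, we have $f_{A}(x)=\frac{x}{1-x}-f_{E}(x)$, and since every element of $E$ is even we have $f_{E}(-x)=f_{E}(x)$. Hence
$$
1+f_{A}(-x)=1-\frac{x}{1+x}-f_{E}(x)=\frac{1}{1+x}-f_{E}(x)=\frac{1-(1+x)f_{E}(x)}{1+x},
$$
so that
$$
\sum_{n=0}^{\infty}(-1)^{n}S_{A,0}(n)x^{n}=\frac{1+x}{1-(1+x)f_{E}(x)}.
$$

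Finally I would conclude by a geometric-series argument. Put $g(x)=(1+x)f_{E}(x)=\sum_{e\in E}(x^{e}+x^{e+1})$. This has non-negative coefficients and, because $\min E\geq 2$, zero constant term, so $1/(1-g(x))=\sum_{j\geq 0}g(x)^{j}$ is a legitimate identity of formal power series whose right-hand side has non-negative coefficients; multiplication by $1+x$ preserves this. Therefore $(-1)^{n}S_{A,0}(n)\geq 0$ for all $n$, and Corollary \ref{prodrec} promotes this to all $k$.

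I do not expect a genuine obstacle here: the entire content is the algebraic simplification $1+f_{A}(-x)=(1-(1+x)f_{E}(x))/(1+x)$, which rests on the two elementary facts $f_{A}=\frac{x}{1-x}-f_{E}$ (from $A=\N_{+}\setminus E$) and $f_{E}(-x)=f_{E}(x)$ (from $E\subset 2\N_{+}$). The only point requiring a moment's care is the degenerate case $E=\emptyset$, i.e.\ $A=\N_{+}$, where the identity still holds and collapses to $\sum_{n}(-1)^{n}S_{A,0}(n)x^{n}=1+x$, consistent with the earlier remark that $S_{\N_{+},0}(n)=0$ for $n\geq 2$.
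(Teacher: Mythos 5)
Your proof is correct and follows essentially the same route as the paper: reduce to $k=0$ via Corollary \ref{prodrec}, use $f_{A}(x)=\frac{x}{1-x}-f_{E}(x)$ and $f_{E}(-x)=f_{E}(x)$ to get $\sum_{n}(-1)^{n}S_{A,0}(n)x^{n}=\frac{1+x}{1-(1+x)f_{E}(x)}$, and observe non-negativity of the coefficients. The only cosmetic difference is that the paper interprets $\frac{1}{1-(1+x)f_{E}(x)}$ as the composition-counting series of $E'=E\cup(E+1)$, while you argue via the geometric series $\sum_{j}g(x)^{j}$ --- these are the same fact.
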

\begin{proof}
From Corollary \ref{prodrec} we know that to get the result it is enough to prove it for $k=0$. From our assumption we can write
$$
f_{A}(x)=\sum_{a\in A}x^{a}= \frac{x}{1-x}-f_{E}(x),
$$
where $f_{E}(x)=\sum_{e\in E}x^{e}$. Moreover, from the assumption on the set $E$ we have $f_{E}(x)=f_{E}(-x)$. Now, we have the following chain of equalities
\begin{align*}
\sum_{n=0}^{\infty}(-1)^{n}S_{A,0}(n)x^{n}=&\frac{1}{1+f_{A}(-x)}=\frac{1}{1-\frac{x}{1+x}-f_{E}(x)}\\
                                          =&\frac{1+x}{1-(1+x)f_{E}(x)}=(1+x)\cdot\frac{1}{1-f_{E'}(x)}\\
                                          =&(1+x)\sum_{n=0}^{\infty}c_{E'}(n)x^{n}=1+\sum_{n=1}^{\infty}(c_{E'}(n)+c_{E'}(n-1))x^{n},
\end{align*}
where $E'=E\cup (E+1)$ (note that from the assumption $E\subset 2\N_{+}$ we have $E\cap (E+1)=\emptyset$). We thus have the equality $(-1)^{n}S_{A,0}(n)=c_{E'}(n)+c_{E'}(n-1)$ for $n\in\N_{+}$. It is clear that this expression is non-negative.
\end{proof}

In the above theorem we constructed a broad class of {\it infinite} sets $A$ containing even (and odd) elements such that $(-1)^{n}S_{A,k}(n)\geq 0$. In the next result we present another method which allow us to construct {\it finite} sets $A$ containing even and odd elements with required property.

\begin{thm}\label{oddfinite}
Let $B\subset 2\N+1$ ba a finite set and $m=\#B$. Assume that the set of all sums of at most $m$ distinct elements of the set $B$ has exactly $2^{m}-1$ elements. Then, there is a finite set $A\subset\N$ such that $B\subset A, A\cap 2\N\neq \emptyset$ and for each $k, n\in\N$ we have $(-1)^{n}S_{A,k}(n)\geq 0$.
\end{thm}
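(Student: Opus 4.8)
The plan is to take $A$ to be the set of all nonempty subset sums of $B$ and to reduce, via the ``in particular'' part of Corollary~\ref{prodrec}, to the case $k=0$; for that case the generating function $\frac{1}{1+f_{A}(-x)}$ turns out to be exactly the $B$-partition generating function $P_{B}(x)$, whose coefficients are manifestly non-negative.

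Write $B=\{b_{1},\ldots,b_{m}\}$ and, for $S\subseteq B$, set $\sigma(S)=\sum_{b\in S}b$. A nonempty subset of an $m$-element set has between $1$ and $m$ elements, so the hypothesis says precisely that the $2^{m}-1$ numbers $\sigma(S)$, $\emptyset\neq S\subseteq B$, are pairwise distinct. Put
$$
A=\{\sigma(S):\ \emptyset\neq S\subseteq B\}.
$$
Then $B\subseteq A$ (take singletons), $\#A=2^{m}-1$, and, as soon as $m\geq 2$, $A$ contains $b_{1}+b_{2}$, which is even since $b_{1},b_{2}$ are odd; hence $A\cap 2\N\neq\emptyset$. (For the boundary case $m=1$, say $B=\{b\}$, one takes instead $A=\{b,2b,3b\}=b\cdot\{1,2,3\}$: since $b$ is odd, $1+f_{A}(-x)=1-x^{b}+x^{2b}-x^{3b}=(1-x^{b})(1+x^{2b})$, whose reciprocal has coefficients in $\{0,1\}$, and Corollary~\ref{prodrec} finishes; this also follows from Proposition~\ref{12m} after rescaling by $b$.)

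The decisive step is the factorization. Because distinct nonempty subsets of $B$ have distinct sums, expanding the product picks up each subset sum exactly once, so
$$
\prod_{b\in B}(1+x^{b})=\sum_{S\subseteq B}x^{\sigma(S)}=1+f_{A}(x).
$$
Substituting $-x$ for $x$ and using that each $b\in B$ is odd, so $(-x)^{b}=-x^{b}$, yields $1+f_{A}(-x)=\prod_{b\in B}(1-x^{b})$. Recalling $S_{A,0}(n)=f_{A,n}(-1)$ and $\sum_{n\geq 0}f_{A,n}(-1)x^{n}=C_{A}(-1,x)=\frac{1}{1+f_{A}(x)}$, we conclude
$$
\sum_{n=0}^{\infty}(-1)^{n}S_{A,0}(n)x^{n}=\frac{1}{1+f_{A}(-x)}=\prod_{b\in B}\frac{1}{1-x^{b}}=P_{B}(x)=\sum_{n=0}^{\infty}p_{B}(n)x^{n},
$$
hence $(-1)^{n}S_{A,0}(n)=p_{B}(n)\geq 0$ for all $n\in\N$. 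Applying Corollary~\ref{prodrec} upgrades this to $(-1)^{n}S_{A,k}(n)\geq 0$ for every $k,n\in\N$, which is the assertion.

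The only genuinely delicate point is the identity $1+f_{A}(x)=\prod_{b\in B}(1+x^{b})$: this is exactly where the distinct-subset-sums hypothesis enters, and it is what guarantees that $f_{A}$ is a $0/1$ power series supported on precisely $2^{m}-1$ exponents. A secondary, minor nuisance is the $m=1$ case for the requirement $A\cap 2\N\neq\emptyset$, handled separately above. Everything else is a formal manipulation of generating functions together with the reduction to $k=0$ supplied by Corollary~\ref{prodrec}.
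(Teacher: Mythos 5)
Your proof is correct and follows essentially the same route as the paper's: take $A$ to be the set of all nonempty subset sums of $B$, use the distinct-sums hypothesis to get $1+f_{A}(x)=\prod_{b\in B}(1+x^{b})$, deduce $(-1)^{n}S_{A,0}(n)=p_{B}(n)\geq 0$ from $1+f_{A}(-x)=\prod_{b\in B}(1-x^{b})$, and upgrade to all $k$ via Corollary~\ref{prodrec}. Your explicit treatment of the case $m=1$ (where the subset-sum set would contain no even element) is a small but genuine improvement over the paper's bare ``without loss of generality $\#B\geq 2$.''
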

\begin{proof}
From Corollary \ref{prodrec} we know that to get the result it is enough to prove it for $k=0$. Without loss of generality we can assume that $\#B\geq 2$. For $B=\{b_{1}, \ldots b_{m}\}$, where $m\in\N_{\geq 2}$ and $b_{1}, \ldots, b_{m}$ are odd, we consider the set $A$ containing all possible sums of the form
$\lambda_{1}+\ldots+\lambda_{j}$, where $\lambda_{1}<\lambda_{2}<\ldots<\lambda_{i}$ for $i\in\{1,\ldots, m\}$ and $\lambda_{j}\in B$. The elements of the set $A$ are coded by the exponent in the (polynomial) expansion $f_{A}(x)=\prod_{i=1}^{m}(1+x^{b_{i}})-1$. We then have the equality
\begin{align*}
F_{A}(-1,-x)&=\sum_{n=0}^{\infty}(-1)^{n}S_{A,0}(n)x^{n}=\frac{1}{1+f_{A}(-x)}\\
            &=\prod_{i=1}^{m}\frac{1}{1-x^{b_{i}}}.
\end{align*}
By comparing the coefficients near $x^{n}$ on both sides, we get the equality $(-1)^{n}S_{A,0}(n)=p_{B}(n)$, where $p_{B}(n)$ is the number of partitions with parts in the set $B$. It is clear that $p_{B}(n)\geq 0$ for $n\in\N$ and we get the result.
\end{proof}

In the above set of results we described several families of sets $A$ having the property $(-1)^{n}S_{A,k}(n)\geq 0$. However, as we will see, there are sets $A$ such that the sequence of signs of $S_{A,0}(n)$ is not eventually periodic.

\begin{thm}\label{negthm}
Let $p(x)\in\R[x]$ be a nonzero polynomial of the form
$$p(x)=c\prod_{i=0}^\rho(1-\omega_ix)^{\alpha_i},$$
where $\omega_i$ are pairwise different and $\alpha_i\in\N_{+}$ for $i=0, \ldots, \rho$. Assume that the number $\omega_0$ is non-real with $\op{Arg}(\omega_{0})\notin \Q\pi$ and such that its modulus is maximal, i.e., $|\omega_0|>|\omega_i|$ for $\omega_i\neq \omega_0,\overline{\omega_0}$. Then, if $1/p(x)=\sum_{k=0}^{\infty}a_{k}x^k$, then the sequence $(\op{sign}(a_k))_{k\in\N}$ is not eventually periodic.
\end{thm}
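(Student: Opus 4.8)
\emph{Plan of proof.} The idea is to pin down the asymptotics of $a_k$ by partial fractions and then to contradict eventual periodicity using the irrationality of $\op{Arg}(\omega_0)/\pi$ together with Weyl's equidistribution theorem.

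First I would expand $1/p(x)$ in partial fractions: since $p(0)=c\neq 0$, one may write
\[
\frac{1}{p(x)}=\sum_{i=0}^{\rho}\sum_{j=1}^{\alpha_i}\frac{c_{i,j}}{(1-\omega_i x)^{j}},\qquad\text{so}\qquad a_k=\sum_{i=0}^{\rho}\sum_{j=1}^{\alpha_i}c_{i,j}\binom{k+j-1}{j-1}\omega_i^{k}.
\]
Because $p$ has real coefficients and $\omega_0$ is non-real, $\overline{\omega_0}$ occurs among the $\omega_i$, say $\omega_1=\overline{\omega_0}$, with the same multiplicity $\alpha_1=\alpha_0$ and with $c_{1,\alpha_0}=\overline{c_{0,\alpha_0}}$; moreover $c_{0,\alpha_0}=\bigl(c\prod_{i\neq0}(1-\omega_i/\omega_0)^{\alpha_i}\bigr)^{-1}\neq 0$ by the pairwise distinctness of the $\omega_i$. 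By the maximality hypothesis on $|\omega_0|$, the only terms of magnitude $\asymp k^{\alpha_0-1}|\omega_0|^{k}$ in the sum for $a_k$ are those with $(i,j)\in\{(0,\alpha_0),(1,\alpha_0)\}$. Writing $\omega_0=\varrho e^{i\theta}$ ($\varrho=|\omega_0|$, $\theta=\op{Arg}(\omega_0)$), $c_{0,\alpha_0}=r e^{i\psi}$ with $r>0$, and using $\binom{k+\alpha_0-1}{\alpha_0-1}=\frac{k^{\alpha_0-1}}{(\alpha_0-1)!}(1+O(1/k))$ together with $c_{0,\alpha_0}\omega_0^{k}+\overline{c_{0,\alpha_0}}\,\overline{\omega_0}^{k}=2r\varrho^{k}\cos(k\theta+\psi)$, one obtains
\[
a_k=\frac{2r\,\varrho^{k}k^{\alpha_0-1}}{(\alpha_0-1)!}\Bigl(\cos(k\theta+\psi)+E_k\Bigr),\qquad E_k\to 0,
\]
where $E_k=O(1/k)$ plus an exponentially small term (the $\omega_0,\overline{\omega_0}$ contributions with $j<\alpha_0$ give $O(1/k)$ after normalisation, the contributions with $|\omega_i|<\varrho$ are exponentially small, and the binomial approximation contributes a further $O(1/k)$ since $\cos$ is bounded). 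Consequently, for every $\varepsilon>0$ there is $K(\varepsilon)$ such that for all $k\geq K(\varepsilon)$ one has $\cos(k\theta+\psi)\geq\varepsilon\Rightarrow a_k>0$ and $\cos(k\theta+\psi)\leq-\varepsilon\Rightarrow a_k<0$.

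Now suppose for contradiction that $(\op{sign}(a_k))_{k\in\N}$ is eventually periodic, i.e.\ there are $N\in\N$ and $T\in\N_+$ with $\op{sign}(a_{k+T})=\op{sign}(a_k)$ for all $k\geq N$. Put $\beta=T\theta$. Since $\theta/\pi\notin\Q$ and $T\geq1$, we have $\beta\not\equiv 0\pmod{2\pi}$ and $\beta\not\equiv \pi\pmod{2\pi}$, so the two arcs $\mathcal A=\{x\bmod 2\pi:\cos x\geq\varepsilon\}$ and $\mathcal B=\{x\bmod 2\pi:\cos(x+\beta)\leq-\varepsilon\}$ are arcs of the common length $\pi-2\arcsin\varepsilon$ centred respectively at $0$ and at $\pi-\beta$, whose centres lie at circular distance $\gamma\in[0,\pi)$. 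Choosing $\varepsilon>0$ small enough that $2\arcsin\varepsilon<\pi-\gamma$, the intersection $\mathcal A\cap\mathcal B$ has positive Lebesgue measure. Since $\theta/(2\pi)$ is irrational, Weyl's theorem gives that $(k\theta+\psi\bmod 2\pi)_{k\in\N}$ is equidistributed on $[0,2\pi)$, hence there exist infinitely many $k\geq\max\{N,K(\varepsilon)\}$ with $k\theta+\psi\bmod 2\pi\in\mathcal A\cap\mathcal B$. For any such $k$ we have $\cos(k\theta+\psi)\geq\varepsilon$, so $a_k>0$, while $\cos((k+T)\theta+\psi)=\cos\bigl((k\theta+\psi)+\beta\bigr)\leq-\varepsilon$, so $a_{k+T}<0$; thus $\op{sign}(a_{k+T})\neq\op{sign}(a_k)$, contradicting the assumed periodicity. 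Hence $(\op{sign}(a_k))_{k\in\N}$ is not eventually periodic.

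The partial-fraction bookkeeping and the estimate $E_k\to 0$ are routine; I expect the only point requiring genuine care to be the last paragraph — namely, showing that the two half-circle arcs $\mathcal A$ and $\mathcal B$ really overlap in a set of positive measure. This is precisely where the hypothesis $\op{Arg}(\omega_0)/\pi\notin\Q$ enters (via $T\theta\not\equiv 0\pmod{2\pi}$ for every $T\geq1$), and it must then be combined with Weyl equidistribution to produce an index $k$ witnessing a "wrong" sign at $k+T$.
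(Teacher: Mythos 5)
Your proof is correct and follows essentially the same route as the paper: isolate the dominant conjugate pair $c_{0,\alpha_0}\omega_0^{k}+\overline{c_{0,\alpha_0}}\,\overline{\omega_0}^{k}$ via partial fractions, normalise to obtain $\cos(k\theta+\psi)+o(1)$, and contradict eventual periodicity using Weyl equidistribution of $k\theta$ modulo $2\pi$. Your endgame (producing $k$ with $a_k>0$ and $a_{k+T}<0$ by intersecting two arcs) is a slightly cleaner packaging than the paper's reduction to a constant-sign subsequence $(a_{kd})$ -- in particular it sidesteps the case where that constant sign is zero -- but the underlying mechanism is identical.
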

\begin{proof}
For convenience assume that $\omega_1=\overline{\omega_0}$. Set $\omega=\omega_0$. We have
$$
a_k=P(k)\omega^k+Q(k)\overline{\omega}^k+\sum_{i>1}P_i(k)\omega_i^k,
$$
where $P_i, P, Q$ are polynomials with coefficients in $\C$. The conjugation on $\C$ extends to the automorphism of the ring $\C[x]$ in the following way:
$$
\C[x]\ni f=\sum_{i=0}^{k}a_ix^i\mapsto \overline{f}:=\sum_{i=0}^{k}\overline{a_i}x^i\in\C[x].
$$
\textbf{Claim: }$Q=\overline{P}$\\
We will postpone the proof of the claim to the end. Let us write $P(z)=bz^n+R(z)$, where $\op{deg}R<n.$ We have the following chain of equalites
\begin{align*}
\frac{a_k}{|b\omega^k|k^n}&=\frac{R(k)}{|b|k^n}\zeta^k+\frac{\overline{R}(k)}{|b|k^n}\overline{\zeta}^k+\frac{b}{|b|}\zeta^k+\frac{\overline{b}}{|b|}\overline{\zeta}^k+\sum_{i>1} \frac{P_i(k)}{|b|k^n}\left(\frac{\omega_i}{|\omega|}\right)^k\\
&=2\text{Re}(c\zeta^k)+o(1),
\end{align*}
where $\zeta=\omega/|\omega|$ and $c=b/|b|.$ Finally, we get that
$$
\op{sign}(a_k)=\op{sign}\left(\frac{a_k}{|bz^k|k^n}\right)=\op{sign}(2\op{Re}(c\zeta^n)+o(1)).
$$
Set $c=e^{2\pi i \gamma}$ and $\zeta=e^{2\pi i \delta}$. The assumption $\op{Arg}(w)\notin \Q\pi$ says that $\delta$ is irrational. Then $2\op{Re}(c\zeta^n)$ depends only on the fractional part $\{\gamma+n\delta\}$. Also looking at the sign of $2\text{Re}(c\zeta^n)+o(1)$ we can ignore error as long as $2\op{Re}(c\zeta^n)$ is big enough, i.e., the corresponding fractional part is far from $\frac{1}{2}\Z.$  Now, if the sequence $(\op{sign}(a_k))_{k\in\N}$ is eventually periodic then we can find $d$ such that $\op{sign}(a_{kd})$ is constant for sufficiently large $k$. This means that $\{\gamma+k(d\delta)\}$ always omits the interval $(\epsilon, 1/2-\epsilon)$ or the interval $(1/2+\epsilon,1-\epsilon)$. However, the number $\delta$ (and so $d\delta$) is irrational, and hence the sequence $(\gamma+k(d\delta))_{k\in\N}$ is equidistributed. This is a contradiction, and we are only left with proving the claim.\\
\textbf{Proof of the claim:}
Let
$$
S(x)=c(x-b_{1})^{s_1}\ldots (x-b_{m})^{s_m}(x^2+q_{1}x+r_{1})^{t_1}\ldots(x^2+q_{l}x+r_{l})^{t_n}
$$
be a decomposition of the polynomial $S$ into irreducible factors in $\R[x].$ Then
$$
1/p(x)=\sum_{i=1}^m\sum_{s=1}^{s_i}\frac{A_{i,s}}{(x-b_i)^s}+\sum_{j=1}^l\sum_{t=1}^{t_j}\frac{B_{j,t}x+C_{j,t}}{(x^2+q_jx+r_j)^t},
$$
where $A_{i,s},B_{j,t},C_{j,t}\in\R$. Notice that it is enough to prove the claim for $\frac{1}{(x^2+qx+r)^t}.$ Write $x^2+qx+r=(x-z)(x-\overline{z})$ and observe that
$$
\frac{1}{(x^2+qx+r)^{t}}=\frac{f_1(x)}{(x-z)^t}+\frac{f_2(x)}{(x-\overline{z})^t},
$$
for $f_{1}, f_{2}\in\C[x]$ satisfying $\op{deg}f_1,\op{deg}f_2<t.$ Now, if we prove that $f_2=\overline{f_1}$, then we are done. However, we have
$$
1=f_1(x)(x-z)^t+f_2(x)(x-\overline{z})^t=\overline{f_1}(x)(x-\overline{z})^t+\overline{f_2}(x)(x-z)^t,
$$
and hence $(x-z)^t\mid f_2(x)-\overline{f_1}(x)$. Because the degree of $f_2-\overline{f_1}$ is $<t$, we get that $f_2=\overline{f_1}$. This ends the proof of the claim and of the whole theorem.
\end{proof}
\begin{cor}
If $A=\{2, 3\}$ or $A=\{1,4\}$ then the sequence $(\op{sign}(S_{A,0}(n)))_{n\in\N}$ is not eventually periodic.
\end{cor}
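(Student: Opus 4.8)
The plan is to reduce the corollary to Theorem~\ref{negthm}. Since $S_{A,0}(n)=f_{A,n}(-1)$ and $\sum_{n\ge 0}f_{A,n}(t)x^n=C_A(t,x)=1/(1-tf_A(x))$, we have $\sum_{n\ge0}S_{A,0}(n)x^n=1/(1+f_A(x))$. Hence for $A=\{2,3\}$ the series in question is $1/p(x)$ with $p(x)=1+x^2+x^3$, and for $A=\{1,4\}$ it is $1/p(x)$ with $p(x)=1+x+x^4$. Writing $p(x)=c\prod_i(1-\omega_ix)$, the numbers $\omega_i$ are precisely the reciprocals of the roots of $p$, and $c=1$ because the leading coefficient of $p$ is $1$ and the product of its roots is $\pm1$. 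So it suffices to verify the hypotheses of Theorem~\ref{negthm}: that $p$ is squarefree, that among its reciprocal roots there is one of strictly maximal modulus apart from its complex conjugate, that this dominant reciprocal root is non-real, and that its argument is not in $\Q\pi$.

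The routine parts first. Both polynomials are irreducible over $\Q$: $1+x^2+x^3$ has no rational root, and $1+x+x^4$ is irreducible modulo $2$. In particular they are squarefree, so the $\omega_i$ are distinct. For the moduli: $x^3+x^2+1$ has a single real root $r_0\in(-2,-1)$ together with a conjugate pair $z,\overline{z}$, and since the product of all three roots is $-1$ we get $|z|^2=1/|r_0|<1$, whence $|z|<1<|r_0|$; passing to reciprocals, the dominant reciprocal roots are exactly the conjugate pair $1/z,1/\overline{z}$, which is non-real. Likewise $x^4+x+1$ has no real root (its minimum on $\R$ is positive), so its four roots form two conjugate pairs; their product is $1$, and they cannot all lie on the unit circle, for then $x^4+x+1$ would be a product of two quadratics $x^2-2\cos\phi_j\,x+1$ and its coefficient of $x$ would vanish. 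Hence one pair $z_1,\overline{z_1}$ has modulus $<1$ and the other modulus $>1$, and the dominant reciprocal roots are the non-real conjugate pair $1/z_1,1/\overline{z_1}$.

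The crux is to show that the argument of the dominant reciprocal root, namely $-\op{Arg}(z)$ (resp. $-\op{Arg}(z_1)$), is not a rational multiple of $\pi$; equivalently, that $z/\overline{z}$ (resp. $z_1/\overline{z_1}$) is not a root of unity. I would prove this via the Galois action on the splitting field of $p$. The cubic $1+x^2+x^3$ has discriminant $-31$, not a square, so its Galois group is $S_3$; the quartic $1+x+x^4$ has the irreducible resolvent cubic $x^3-4x-1$ and discriminant $229$, not a square, so its Galois group is $S_4$. In either case the Galois group acts on the roots as the full symmetric group, so there is an automorphism $\sigma$ of the splitting field with $\sigma(z),\sigma(\overline{z})$ two roots of distinct absolute value: in the cubic case take $\sigma$ sending $z$ to $r_0$ and $\overline{z}$ to $z$, so that $\sigma(z/\overline{z})=r_0/z$ has absolute value $|r_0|^{3/2}>1$; in the quartic case take $\sigma$ fixing $z_1$ and sending $\overline{z_1}$ to the large-modulus root $z_2$, so that $\sigma(z_1/\overline{z_1})=z_1/z_2$ has absolute value $|z_1|^{2}<1$. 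Since every conjugate of a root of unity is a root of unity, $z/\overline{z}$ (resp. $z_1/\overline{z_1}$) is not a root of unity, so $\op{Arg}(z)\notin\Q\pi$ (resp. $\op{Arg}(z_1)\notin\Q\pi$), and Theorem~\ref{negthm} applies to $1/p(x)=\sum_nS_{A,0}(n)x^n$; thus $(\op{sign}(S_{A,0}(n)))_{n\in\N}$ is not eventually periodic.

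The main obstacle is exactly this last step; the rest is bookkeeping about small polynomials. If one wants to avoid computing Galois groups, an alternative for the cubic is: if $\op{Arg}(z)\in\Q\pi$ then $z^b\in\R$ for some $b\ge1$, so $z^b$ lies in the cubic field $\Q(z)$ and has at most two distinct conjugates; since $3$ is prime this forces $z^b\in\Q$, contradicting $|z^b|<1<|r_0^b|$ (as $r_0^b$ would then be a conjugate of, hence equal to, $z^b$). For the quartic this shortcut is less clean because $4$ is not prime, which is why I would retain the $S_4$ argument there.
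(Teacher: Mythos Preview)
Your argument is correct and genuinely different from the paper's. The paper verifies the hypotheses of Theorem~\ref{negthm} numerically: for $A=\{2,3\}$ it approximates the real root $t\approx-1.46$ and the complex root $z\approx 0.23-0.79i$ of $1+x^2+x^3$ to see $|z|<|t|$, then argues that $[\Q(\zeta):\Q]\le 12$ for $\zeta=z/|z|$ and checks $\zeta^{12}\neq 1$ by evaluating $\zeta^{12}\approx -0.95-0.28i$; the case $A=\{1,4\}$ is dismissed as similar. Your route is algebraic throughout: you get the modulus comparison from the product of the roots, and you rule out $\op{Arg}(z)\in\Q\pi$ by computing the Galois group ($S_3$ via $\operatorname{disc}=-31$, $S_4$ via the irreducible resolvent $y^3-4y-1$ and $\operatorname{disc}=229$) and exhibiting an automorphism that sends $z/\overline{z}$ to an element off the unit circle. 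What the paper's approach buys is elementarity --- no Galois theory is invoked --- at the cost of relying on floating-point approximations; note also that the inference ``$[\Q(\zeta):\Q]\le 12\Rightarrow \zeta^{12}=1$ if $\zeta$ is a root of unity'' is not literally valid (e.g.\ $\phi(13)=12$), so the paper's check is in fact incomplete, whereas your Galois argument closes that gap cleanly and treats both sets uniformly. One cosmetic point: $c=p(0)=1$ directly, which is simpler than invoking the leading coefficient and product of roots.
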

\begin{proof}
We just need to check that assumptions of previous theorems hold in both cases. Let us take first $A=\{2,3\}$ which gives $p(x)=1+x^2+x^3.$ Let $t$ be the unique real root  of $p$ and $z$ be its complex root. We have $x\approx -1.46$ and $z\approx 0.23-0.79i$, and thus indeed $|z|<|x|.$ The last thing to check is that $\zeta=z/|z|$ is  not a root of unity. We have that $[\Q(\zeta):\Q]$ is at most 12 so it is enough to check that $\zeta^{12}\neq 1$ but $\zeta^{12}\approx -0.95-0.28i$ so all the conditions of Theorem \ref{negthm} are met. Similarly for the second case.
\end{proof}

\section{Some problems and a question}\label{sec4}

In Theorem \ref{oddfinite} we constructed a broad class of finite sets $A$ containing even (and odd) elements such that $(-1)^{n}S_{A,k}(n)\geq0$. However, one can go further and state the following general problem.

\begin{prob}
Let $A\subset \N_{+}$ and suppose that there exist $k\in\N$ such that for infinitely many $n$ the inequality $(-1)^{n}S_{A,k}(n)<0$ holds. Construct an optimal set $B\supset A$, i.e.,  with the minimal possible values of $\#(B\setminus A)$ and $\op{min}(B)$, such that $(-1)^nS_{B,k}(n)\geq0$ for every $k$.
\end{prob}
From Proposition \ref{12m} we know that if $A=\{1,\ldots, m\}$ and $m$ is even, the optimal set for $A$ is $B=A\cup\{m+1\}$. However, in general, it is not clear whether, for a given set $A$, an optimal set $B$ is uniquely determined.

%\begin{thm}\label{adfs}
%Assume $A$ is subset of odd positive integers such that all finite sums of elements from $A$ are different. If $B$ is the set of all this finite sums (in particular $A\subset %B$) then $(-1)^nS_{B,k}(n)\geq0$.
%\end{thm}
%\begin{proof}
%We concentrate on the case $k=0$ only. It's clear that
%$$F_B(-1,-x)=\prod_{a\in A}\frac{1}{1-x^a}$$
%The statement of the theorem now follows immediately.
%\end{proof}

%\begin{ques}
%Is it possible to generalize the above theorem for any finite set $B\subset \N$, i.e., we allow $B$ to have also even integers?
%\end{ques}

%\begin{thm}[proof needed]
%Let $k, m\in\N_{+}$ be fixed and consider
%$$
%A=\{kn+1:\;n\in\N\}\setminus(\{ki+1:\;i=1,\ldots,m\}\cap (2\N_{+})).
%$$
%Then, for each $k, n\in\N$ we have $(-1)^{n}S_{A,k}(n)\geq 0$
%\end{thm}

\begin{ques}
Let $N\in\N_{+}$. What can be said about the quantity
$$
F(N):=\#\{A\subset\{1,\ldots, N\}:\;\forall k\in\N\;\forall! n\in\N:\;(-1)^{n}S_{A,k}(n)\geq 0\}?
$$
\end{ques}

From Corollary \ref{oddset} we know that $F(n)\geq 2^{\lfloor\frac{n+1}{2}\rfloor}$.

\begin{prob}
For which disjoint sets $A, B\subset\N_{+}$ the conditions $(-1)^{n}S_{A,0}(n)\geq 0, (-1)^{n}S_{B,0}(n)\geq 0$ implies $(-1)^{n}S_{A\cup B,0}(n)\geq 0$?
\end{prob}

One can formulate the following simpler problem.

\begin{prob}
Let $B\subset 2\N+1$. Does there exist an even number $b$ such that for $A=B\cup \{b\}$ we have $(-1)^{n}S_{A,0}(n)\geq 0$?
\end{prob}

It seems that if $m>3$ is even and we take $B=\{\frac{m^{i}-1}{m-1}:\;i\in\N_{+}\}$ then one can take $b=m$. In general let us note that if $A\cap B=\emptyset$, then $f_{A\cup B}(x)=f_{A}(x)+f_{B}(x)$ and thus, using the equalities
$$
C_{A}(t,x)=\frac{1}{1-tf_{A}(x)},\quad C_{B}(t,x)=\frac{1}{1-tf_{B}(x)},\quad C_{A\cup B}(t,x)=\frac{1}{1-tf_{A\cup B}(x)},
$$
we can eliminate $f_{A}, f_{B}, f_{A\cup B}$ and get the functional relation
$$
\frac{1}{C_{A}(t,x)}+\frac{1}{C_{B}(t,x)}-\frac{1}{C_{A\cup B}(t,x)}=1,
$$
and natural question arises whether is it possible to use it to deduce any sufficient conditions on $A, B$ which will guarantee that $(-1)^{n}S_{A\cup B,0}(n)\geq 0$ for all $n\in\N$.

\vskip 1cm

\noindent Filip Gawron, Jagiellonian University, Faculty of Mathematics and Computer Science, Institute of Mathematics, {\L}ojasiewicza 6, 30-348 Krak\'ow, Poland; email: filipux.gawron@doctoral.uj.edu.pl

\bigskip

\noindent Maciej Ulas, Jagiellonian University, Faculty of Mathematics and Computer Science, Institute of
Mathematics, {\L}ojasiewicza 6, 30-348 Krak\'ow, Poland; email:
maciej.ulas@uj.edu.pl

\end{document}